\DeclareMathOperator{\reg}{reg}
\DeclareMathOperator{\PSL}{PSL}
\newcommand{\N}{\mathbb{N}}
\newcommand{\Z}{\mathbb{Z}}
\newcommand{\R}{\mathbb{R}}
\newcommand{\C}{\mathbb{C}}
\newcommand{\Q}{\mathbb{Q}}
\renewcommand{\H}{\mathbb{H}}
\DeclareMathOperator{\sgn}{sgn}
\DeclareMathOperator{\e}{\mathfrak{e}}
\renewcommand{\pmod}[1]{\  \,  \left( \mathrm{mod} \,  #1 \right)}
\DeclareMathOperator{\Gr}{Gr}
\DeclareMathOperator{\CT}{CT}
\numberwithin{equation}{section}
	\newtheorem{theorem}{Theorem}[section]
	\newtheorem{proposition}[theorem]{Proposition}
	\theoremstyle{definition}
	\newtheorem{remark}[theorem]{Remark}
\author{Markus Schwagenscheidt}
\title{Cycle integrals of meromorphic modular forms and Siegel theta functions}
\begin{document} 

\begin{abstract}
	We study meromorphic modular forms associated with positive definite binary quadratic forms and their cycle integrals along closed geodesics in the modular curve. We show that suitable linear combinations of these meromorphic modular forms have rational cycle integrals. Along the way, we evaluate the cycle integrals of the Siegel theta function associated with an even lattice of signature $(1,2)$ in terms of Hecke's indefinite theta functions, which is of independent interest.
\end{abstract}

\maketitle

\section{Introduction}

Let $\mathcal{Q}_{d}$ be the set of all (positive definite if $d < 0$) integral binary quadratic forms of discriminant $d$. For $k \in \N$ with $k \geq 2$ we consider the functions
\[
f_{k,d}(z) = \frac{|d|^{k-1/2}}{\pi}\sum_{Q \in \mathcal{Q}_{d}}\frac{1}{Q(z,1)^{k}},
\]
which transform like modular forms of weight $2k$ for $\Gamma = \PSL_{2}(\Z)$. These functions were first studied by Zagier \cite{zagierdoinaganuma} for $d > 0$, in which case they are cusp forms, and by Bengoechea for $d < 0$, in which case they are meromorphic modular forms. We obtain a refinement $f_{k,P}$ of $f_{k,d}$ by summing only over quadratic forms in the equivalence class of a fixed form $P$. 

In \cite{alfesbringmannschwagenscheidt} we computed the cycle integrals of the meromorphic modular forms $f_{k,P}$ associated with positive definite quadratic forms $P$. These cycle integrals are defined by\footnote{If $f_{k,P}$ has poles on $S_A$ then the cycle integral can be defined using the Cauchy principal value as in \cite{loebrichschwagenscheidtcycleintegrals}.}
\[
\mathcal{C}_A(f_{k,P}) = \int_{\Gamma_A \backslash S_A}f_{k,P}(z)A(z,1)^{k-1}dz,
\]  
where $A = [a,b,c]$ is an indefinite integral binary quadratic form of positive non-square discriminant $D > 0$, $\Gamma_A$ denotes the stabilizer of $A$, and $S_A = \{z \in \H \, : \, a|z|^2 + b\Re(z) + c = 0\}$ is the geodesic corresponding to $A$. We let $\mathcal{C}_D(f_{k,P})$ be the $D$-th trace of cycle integrals, that is, the sum of $\mathcal{C}_A(f_{k,P})$ over all classes $A \in \mathcal{Q}_D /\Gamma$, and we let $M_{3/2-k}^!$ denote the space of weakly holomorphic modular forms of weight $3/2-k$ for $\Gamma_0(4)$ satisfying Kohnen's plus space condition $a_g(n) = 0$ unless $(-1)^{k+1} n \equiv 0,1 \pmod 4$. Then the main result of \cite{alfesbringmannschwagenscheidt} is as follows.

\begin{theorem}[Theorem~1.1 in \cite{alfesbringmannschwagenscheidt}]\label{theorem abs}
Let $k \geq 2$ be even and let $g \in M_{3/2-k}^!$. Suppose that the coefficients $a_g(-D)$ are rational for $D > 0$ and vanish if $D$ is a square. Then the linear combinations of traces of cycle integrals
\[
\sum_{D > 0}a_g(-D)\mathcal{C}_D(f_{k,P})
\]
of the individual meromomorphic modular forms $f_{k,P}$ are rational.
\end{theorem}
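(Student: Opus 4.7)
The plan is to turn the identity into a statement about Fourier coefficients of half-integral weight modular forms, using the Siegel theta function of signature $(1,2)$ as an intermediary, and to exploit the fact (announced in the abstract) that its cycle integrals are Hecke indefinite theta functions with rational Fourier coefficients. Concretely, I would work with the lattice $K$ of integral binary quadratic forms under the discriminant form, and its Siegel theta function $\Theta_K(\tau,z)$ built with a Kudla--Millson-type polynomial so that $\Theta_K$ transforms with weight $k+1/2$ in $\tau$ (for $\Gamma_0(4)$ in the Kohnen plus space) and weight $2k$ in $z$. The definition of $f_{k,P}$ as a sum over quadratic forms of fixed discriminant identifies $f_{k,P}$ with the $P$-component of the $\tau$-Fourier expansion of $\Theta_K$, which is the standard starting point for a Shintani/Zagier-type theta lift.

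Next, I would carry out the cycle integral of $\Theta_K$ geodesic by geodesic. For an indefinite class $A \in \mathcal{Q}_D/\Gamma$ one evaluates
\[
\mathcal{C}_A(\Theta_K)(\tau) = \int_{\Gamma_A \backslash S_A} \Theta_K(\tau,z) A(z,1)^{k-1}\,dz,
\]
and sums over $\Gamma$-classes to obtain $\mathcal{C}_D(\Theta_K)(\tau)$. Here I would parametrize $S_A$ by a geodesic and evaluate the Gaussian factors; the outcome (the key lemma of the paper) should be that $\mathcal{C}_D(\Theta_K)(\tau)$ is, up to an explicit constant, a Hecke indefinite theta function attached to an index $2$ sublattice associated with $A$. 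These Hecke theta functions are holomorphic modular forms of weight $k+1/2$ whose $q$-expansions have \emph{integer} coefficients given by (signed) counts of lattice points. This step is the crux; the main obstacle will be the careful analytic setup of the geodesic integral and the identification of the resulting sign functions with Hecke's original sign-of-coordinates construction.

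With this evaluation in hand, I would assemble the generating series $F_P(\tau) := \sum_{D>0} \mathcal{C}_D(f_{k,P}) q^D$ as a suitable theta integral against $\Theta_K$, and then express the target sum as a pairing
\[
\sum_{D>0} a_g(-D)\,\mathcal{C}_D(f_{k,P}) = \operatorname{CT}\!\bigl(g \cdot F_P\bigr),
\]
interpreted via Bruinier--Funke / Borcherds-type duality for weakly holomorphic forms in the Kohnen plus space. Using the theta representation of $F_P$ and unfolding $g$ against $\Theta_K$, the right-hand side rewrites as the constant term of the product of $g$ with the Hecke indefinite theta functions coming from Step~2. Both factors have rational Fourier coefficients (by hypothesis on $g$, and by the integrality of Hecke coefficients), so the constant term is rational.

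The main obstacle, as anticipated, is the pole structure of $f_{k,P}$, which forces all the above pairings and theta integrals to be interpreted in a regularized (Cauchy principal value) sense as indicated by the footnote in the statement. Making the unfolding rigorous requires showing that the boundary/pole contributions from regularizing the $z$-integral against $\Theta_K$ vanish, or cancel across the sum over $D$, and this is precisely where the hypothesis that $a_g(-D)$ vanishes for square $D$ is used: squares correspond to isotropic/cuspidal directions in the signature $(1,2)$ geometry whose regularization would otherwise produce non-rational constants coming from special values of $L$-functions or $\log$-type contributions.
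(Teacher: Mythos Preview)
First, note that the paper does not prove this theorem: it is quoted from \cite{alfesbringmannschwagenscheidt} as background, and the paper instead proves the dual statement (Theorem~\ref{main theorem intro}) where the geodesic $A$ is fixed and one sums over negative discriminants $d$. The remark following Theorem~\ref{main theorem} points to \cite{alfesbringmannmalesschwagenscheidt} for an analogous explicit formula in the setting of Theorem~\ref{theorem abs}.

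Your proposal has a genuine gap stemming from a conflation of the two dual setups. The claim that ``$f_{k,P}$ is the $P$-component of the $\tau$-Fourier expansion of $\Theta_K$'' is false: the $\tau$-Fourier coefficients of a Siegel or Shintani theta kernel are Gaussians times polynomials in $z$, never the meromorphic series $\sum_{Q\sim P}Q(z,1)^{-k}$. Your evaluation of $\mathcal{C}_A(\Theta_K)$ along a fixed geodesic as a Hecke indefinite theta function is precisely Theorem~\ref{theorem cycle integral siegel theta} of the paper and is the key input for the \emph{dual} Theorem~\ref{main theorem intro}; but it does not compute $\mathcal{C}_D(f_{k,P})$ for varying $D$ and fixed positive definite $P$, so it does not connect to the generating series $F_P$ in the way you suggest. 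What the paper actually does (for the dual, and what \cite{alfesbringmannmalesschwagenscheidt} does for the present statement) is to realize the meromorphic form as a regularized theta \emph{lift} of a half-integral weight harmonic Maass form (Proposition~\ref{proposition theta lift}), interchange the cycle integral with the regularized integral over $\mathcal{F}$, and only then invoke the Hecke theta evaluation. Finally, your diagnosis of the non-square hypothesis is off: $D$ non-square is what makes the geodesic $S_A$ closed and the sublattice $I$ anisotropic (see the Remark after Theorem~\ref{theorem cycle integral siegel theta}); it is not about regularizing the poles of $f_{k,P}$.
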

	
In \cite{loebrichschwagenscheidtcycleintegrals} we gave some refinements of this result and made a conjecture concerning the rationality of the individual cycle integrals of linear combinations of various $f_{k,d}$'s, which can be viewed as a dual version of Theorem~\ref{theorem abs} for odd $k$. Our goal is to prove this conjecture. 

\begin{theorem}[Conjecture 2.5 in \cite{loebrichschwagenscheidtcycleintegrals}]\label{main theorem intro}
	Let $k\geq 3$ be odd and let $g \in M_{3/2-k}^!$. Suppose that the coefficients $a_g(d)$ for $d < 0$ are rational. Then the individual cycle integrals
	\[
	\mathcal{C}_A\left(\sum_{d < 0}a_g(d)f_{k,d}(z)\right)
	\]
	of linear combinations of the meromorphic $f_{k,d}$ are rational.
\end{theorem}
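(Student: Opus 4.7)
My approach is the natural odd-weight counterpart of the strategy behind Theorem~\ref{theorem abs}: I would realize the combination $\sum_{d<0}a_g(d)f_{k,d}$ as the regularized theta lift of $g$ against the Siegel theta kernel $\Theta_L(\tau,z)$ attached to the even lattice $L$ of integral binary quadratic forms (with the negative discriminant as norm), which has signature $(1,2)$ in the paper's setup. Up to standard normalizations, the Fourier coefficient of $\Theta_L$ at negative index $d$ in $\tau$, viewed as a function of $z$, is a multiple of $f_{k,d}(z)$, and the usual Borcherds/Serre-duality identity then reads
\[
\sum_{d<0}a_g(d)f_{k,d}(z) \;=\; \bigl\langle g,\Theta_L(\cdot,z)\bigr\rangle^{\mathrm{reg}}.
\]

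The first step would be to interchange the cycle integral along $\Gamma_A\backslash S_A$ with the $\tau$-integration defining this pairing, so that
\[
\mathcal{C}_A\Bigl(\sum_{d<0}a_g(d)f_{k,d}\Bigr)
\;=\; \bigl\langle g,\mathcal{C}_A(\Theta_L)\bigr\rangle^{\mathrm{reg}}.
\]
For a weakly holomorphic $g$ the regularized pairing collapses by Stokes'/Serre duality to the finite $\Q$-linear combination
\[
\sum_{d<0}a_g(d)\,c(-d),
\]
where $c(n)$ denotes the $n$-th Fourier coefficient of the holomorphic part of $\mathcal{C}_A(\Theta_L)(\tau)$. Thus the theorem is reduced to showing that $\mathcal{C}_A(\Theta_L)$ has rational Fourier coefficients.

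The second step, which is the independently interesting evaluation advertised in the abstract, is to identify $\mathcal{C}_A(\Theta_L)(\tau)$ explicitly as a Hecke-type indefinite theta function. The plan is to decompose $L\otimes\R$ with respect to the geodesic $S_A$ into the hyperbolic line $\R A$ (signature $(1,0)$) and its orthogonal complement (signature $(0,2)$); unfolding the sum defining $\Theta_L$ against $\Gamma_A\backslash S_A$ and performing the Gaussian integration in the $z$-direction perpendicular to $S_A$ should factor the integral as a positive-definite theta attached to the $(0,2)$-part times an indefinite theta in the $(1,1)$-piece whose summation range is cut out by the sign of the pairing with $A$ — precisely the shape of Hecke's indefinite theta series. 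Such series have rational (in fact algebraic integer) Fourier coefficients, and combined with the pairing above this yields the claim.

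The principal obstacle is step two: carrying out the unfolding of $\Theta_L$ against the closed geodesic $\Gamma_A\backslash S_A$, executing the Gaussian integration, and recognizing the resulting signed lattice sum as one of Hecke's indefinite series. The exchange of integrations in step one also requires some care, since $f_{k,d}$ may have poles on $S_A$; these are handled by the Cauchy principal value framework of \cite{loebrichschwagenscheidtcycleintegrals}, and the justification of Fubini must be compatible with that regularization as well as with the Borcherds regularization at the cusp of $\tau$. Once both of these are settled, the rationality of the individual cycle integrals is a formal consequence of the rationality of the Fourier coefficients of Hecke's theta functions and of the coefficients $a_g(d)$.
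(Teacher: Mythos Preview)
Your overall architecture---realize the sum as a regularized theta lift, swap the cycle integral inside, evaluate the cycle integral of the theta kernel, and then read off rationality by Serre/Stokes duality---is exactly the paper's strategy. But two of your steps contain real gaps.

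First, the signature bookkeeping is inverted. In the paper's lattice $L$ the form $A$ has \emph{negative} norm ($q(A)=-D/4$), so the splitting along $S_A$ is $N=L\cap\Q A$ of signature $(0,1)$ and $I=L\cap(\Q A)^\perp$ of signature $(1,1)$, not $(1,0)\oplus(0,2)$. The Hecke indefinite theta series $\vartheta_I$ is attached to the $(1,1)$ piece $I$.

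Second, and more seriously, the cycle integral $\mathcal{C}_A$ of the (raised) Siegel theta kernel is \emph{not} holomorphic: the paper's Theorem~\ref{theorem cycle integral siegel theta} gives
\[
\mathcal{C}_A\big(R_0\Theta_L(\tau,\,\cdot\,)\big)\;\stackrel{\cdot}{=}\;\vartheta_I(\tau)\otimes\overline{\Theta_{3/2,N}(\tau)}\,v^{3/2},
\]
so the factor coming from the negative-definite line $N$ is $\overline{\Theta_{3/2,N}}v^{3/2}$, not a holomorphic theta. Consequently your ``Stokes collapses the pairing to $\sum a_g(d)c(-d)$ with $c(n)$ the holomorphic Fourier coefficients of $\mathcal{C}_A(\Theta_L)$'' step does not go through as stated. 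The paper handles this by choosing a harmonic Maass form $\widetilde{\Theta}_{3/2,N}$ of weight $1/2$ with $\xi_{1/2}\widetilde{\Theta}_{3/2,N}=\Theta_{3/2,N}/\sqrt{D}$, rewriting the non-holomorphic factor as an $L$-image of a Rankin--Cohen bracket $[\vartheta_I,\widetilde{\Theta}_{3/2,N}]_{(k-1)/2}$, and only then applying Stokes to obtain a constant term formula (Theorem~\ref{main theorem}). Rationality then requires the additional, non-trivial input that $\widetilde{\Theta}_{3/2,N}$ can be chosen with rational holomorphic part, which the paper imports from \cite{lischwagenscheidt}. Your sketch does not account for this harmonic Maass form step, and without it the argument does not close. (A smaller point: $\Theta_L$ has weight $0$ in $z$, so $f_{k,d}$ is not literally its $d$-th $\tau$-coefficient; one needs the raising operators as in Proposition~\ref{proposition theta lift}, but that is largely a matter of normalization.)
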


In \cite{loebrichschwagenscheidtcycleintegrals} this result was stated as a conjecture for higher level, even and odd $k$, and twisted versions of $f_{k,d}$. Our arguments also work in this general setup, see Section~\ref{section generalizations}.

The proof of Theorem~\ref{main theorem intro} follows a method of Bruinier, Ehlen, and Yang (see the proof of \cite[Theorem~5.4]{bruinierehlenyang}). We use the well-known fact that $f_{k,d}$ can be written as a regularized theta lift. Then we interchange the cycle integral with the regularized integral, which leads us to study the cycle integrals of the Siegel theta function $\Theta_L(\tau,z)$ associated with an even lattice $L$ of signature $(1,2)$. The evaluation of these cycle integrals is the main novelty of this work. Let us describe the result in more detail.

The indefinite quadratic form $A$ defines a vector of negative length in the lattice $L$, and hence yields sublattices $I = L \cap (\Q A)^\perp$ and $N = L \cap (\Q A)$ of signature $(1,1)$ and $(0,1)$, respectively. Hecke \cite{hecke} constructed an indefinite theta series $\vartheta_I(\tau)$ associated with the lattice $I$, which is a cusp form of weight $1$ for the Weil representation $\rho_I$, see \eqref{eq hecke theta series}. Moreover, there is a classical holomorphic weight $3/2$ theta series $\Theta_{3/2,N}(\tau)$ associated to the unary lattice $N$, see \eqref{eq holomorphic theta function}.

\begin{theorem}
	Suppose that $L = I \oplus N$. Then we have
	\[
	\mathcal{C}_A\left(\frac{\partial}{\partial z}\Theta_L(\tau,z) \right) \stackrel{\cdot}{=} \vartheta_I(\tau) \otimes \overline{\Theta_{3/2,N}(\tau)}v^{3/2}.
	\]
\end{theorem}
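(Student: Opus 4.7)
The plan is to use the orthogonal splitting $L = I \oplus N$ to factorize the Siegel theta kernel along $S_A$, so that the $z$-dependence is isolated in a signature-$(1,1)$ theta whose cycle integral is Hecke's $\vartheta_I$, while the signature-$(0,1)$ part contributes the shadow-type factor $\overline{\Theta_{3/2,N}(\tau)}\,v^{3/2}$.

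I would first write $\Theta_L(\tau,z)$ in its Grassmannian realization: for $z \in \H$ one has a positive line $\R w(z) \subset L_{\R}$, giving an orthogonal decomposition $\lambda = \lambda_{z^+} + \lambda_{z^-}$ for every $\lambda \in L'$, and
\[
\Theta_L(\tau,z) = v \sum_{\lambda \in L'} p_z(\lambda)\, e\bigl(\tau Q(\lambda_{z^+}) + \bar\tau Q(\lambda_{z^-})\bigr)\, \e_\lambda
\]
for the appropriate Shintani--Kudla--Millson polynomial $p_z(\lambda)$ in signature $(1,2)$. Since $A$ corresponds to a negative vector $\lambda_A \in N$, the geodesic $S_A$ is exactly the set of $z \in \H$ with $w(z) \perp \lambda_A$, equivalently $w(z) \in I_{\R}$. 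I would parametrize $S_A$ by a hyperbolic coordinate $t$ on which $\Gamma_A$ acts as an integer translation, so that $\Gamma_A \backslash S_A$ becomes a circle.

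The central observation is that for $z \in S_A$ the splitting $L_{\R} = I_{\R} \oplus N_{\R}$ is compatible with the signature decomposition: writing $\lambda = \lambda_I + \lambda_N$, one has $\lambda_{z^+} = (\lambda_I)_{z^+}$ and $\lambda_{z^-} = (\lambda_I)_{z^-} + \lambda_N$. Substituting this, using $Q(\lambda) = Q(\lambda_I) + Q(\lambda_N)$, and separating $p_z(\lambda)$ into an $I$- and an $N$-factor, the lattice sum factorizes as
\[
\Theta_L(\tau,z)|_{S_A} \stackrel{\cdot}{=} \Theta_I(\tau,z) \otimes \overline{\Theta_{3/2,N}(\tau)}\, v^{3/2},
\]
where the right-hand factor results from rewriting the anti-holomorphic Gaussian $e(\bar\tau Q(\lambda_N))$ together with the normalization $v$ in terms of a holomorphic weight-$3/2$ theta series with spherical polynomial $\lambda_N$. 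Since this factor is $z$-independent, $\partial_z$ and the cycle integral act only on the $I$-factor, reducing the theorem to $\mathcal{C}_A(\partial_z \Theta_I(\tau,z)) \stackrel{\cdot}{=} \vartheta_I(\tau)$.

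This last identity is the classical evaluation of a cycle integral of a signature-$(1,1)$ theta kernel: working in a null basis $e_+, e_-$ of $I_{\R}$ adapted to $S_A$, write $\lambda_I = m e_+ + n e_-$ so that $Q(\lambda_I) = -mn$; the projections $(\lambda_I)_{z^\pm}$ become explicit functions of $t$, and the derivative $\partial_z$ produces precisely the bilinear factor in $(m,n)$ needed to convert the $t$-integral of the Gaussian into the sign-term $\sgn(m) - \sgn(n)$ appearing in Hecke's indefinite theta $\vartheta_I(\tau)$. The main obstacle throughout is the bookkeeping of normalizations: the precise form of $p_z(\lambda)$ in signature $(1,2)$, the redistribution of $v$-factors and the Gaussian in $Q(\lambda_N)$ producing the shadow-type combination $\overline{\Theta_{3/2,N}(\tau)}\,v^{3/2}$, and the exact matching of the $t$-integral with Hecke's series. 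The interchange of the cycle integral and the lattice sum is justified by a standard dominated-convergence argument along the lines of \cite{alfesbringmannschwagenscheidt,loebrichschwagenscheidtcycleintegrals}.
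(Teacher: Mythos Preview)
Your overall architecture is right, but the key step---``factor $\Theta_L$ along $S_A$, then differentiate''---does not work as stated, and this is where the weight-$3/2$ factor actually originates. The paper's $\Theta_L(\tau,z) = v\sum_X e\bigl(q(X_z)\tau + q(X_{z^\perp})\bar\tau\bigr)\e_X$ carries \emph{no} polynomial; the polynomial only appears after applying $R_0 = 2i\,\partial_z$, which produces the degree-two factor $p_X(z)\,y^{-2}\overline{Q_X(z)}$ (see \eqref{eq raised theta}). If you restrict the undifferentiated $\Theta_L$ to $S_A$, the $N$-summand contributes only the bare Gaussian $e(q(W)\bar\tau)$, i.e.\ the weight-$1/2$ theta $\overline{\Theta_{1/2,N}}$, not $\overline{\Theta_{3/2,N}}$; there is no mechanism by which the spherical polynomial $(W,A)$ emerges from the Gaussian alone. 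Moreover, you cannot push $\partial_z$ onto the $I$-factor after restricting: the $N$-part $e\bigl(q(W_{z^\perp})\bar\tau\bigr)$ is $z$-independent \emph{only on} $S_A$ (indeed $W_z = 0$ is exactly the condition $z \in S_A$), so restriction and $\partial_z$ do not commute.

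The paper's route is to differentiate \emph{first} and then restrict. On $S_A$ one has $p_W(z)=0$ and $y^{-2}\overline{Q_W(z)} = -2(W,A)\,Q_A(z)^{-1}$, so the polynomial $p_X(z)\,y^{-2}\overline{Q_X(z)}$ splits \emph{additively} as
\[
p_Y(z)\,y^{-2}\overline{Q_Y(z)} \;-\; 2(W,A)\,Q_A(z)^{-1}\,p_Y(z),
\]
yielding \emph{two} tensor summands: $\Theta_I^*(\tau,z)\otimes\overline{\Theta_{1/2,N}(\tau)}v^{1/2}$ and $Q_A(z)^{-1}\Theta_I(\tau,z)\otimes\overline{\Theta_{3/2,N}(\tau)}v^{3/2}$. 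The second is the term you want, and its cycle integral is indeed Hecke's computation (your final paragraph is correct in spirit, with the measure $dz/Q_A(z)$ becoming $dt/t$). But there is a genuine extra term, and one must show that $\mathcal{C}_A(\Theta_I^*) = 0$; in the paper this follows from recognizing the integrand as an exact $t$-derivative. Your proposal misses this term entirely because of the order-of-operations issue above.
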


Here $\stackrel{\cdot}{=}$ means equality up to an explicit non-zero constant factor. For the precise version of this result we refer to Theorem~\ref{theorem cycle integral siegel theta}. The idea of the proof is that $\Theta_L(\tau,z)$ essentially splits as a product of two Siegel theta functions $\Theta_I(\tau,z)$ and $\Theta_{N}(\tau)$ along the geodesic $S_A$, and the cycle integral of $\Theta_I(\tau,z)$ yields the Hecke theta series $\vartheta_I(\tau)$.

Combining the above evaluation of the cycle integral of the Siegel theta function with the theta lift realization of $f_{k,d}$, we obtain a representation of $\mathcal{C}_A(f_{k,d})$ as a regularized Petersson inner product, which can in turn be evaluated using the theory of harmonic Maass forms. This gives an explicit formula for the cycle integrals appearing in Theorem~\ref{main theorem intro} (see Theorem~\ref{main theorem}), from which their rationality can easily be deduced.

This paper is organized as follows. In Section~\ref{section theta lifts} we recall the theta lift realization of $f_{k,d}$. In Section~\ref{section cycle integral siegel theta} we evaluate the cycle integrals of the Siegel theta function, which is the technical heart of this work. In Section~\ref{section cycle integral meromorphic} we put everything together and deduce an explicit rational formula for the cycle integrals of $f_{k,d}$, which also implies Theorem~\ref{main theorem intro}. Finally, in Section~\ref{section generalizations} we sketch the proof of a higher level version of Theorem~\ref{main theorem intro}.

\subsection*{Acknowledments} The author was supported by SNF project PZ00P2\_202210.

\section{Meromorphic modular forms as theta lifts}\label{section theta lifts}

In this section we recall the realization of the meromophic modular form $f_{k,d}$ as a theta lift of a harmonic Maass form, following \cite{alfesbruinierschwagenscheidt}. We consider the even lattice
\[
L = \left\{X=\begin{pmatrix}b & c \\ -a & -b \end{pmatrix}: a,b,c \in \Z \right\}
\]
with the quadratic form $q(X) = \det(X)$. It has signature $(1,2)$, and its dual lattice is given
\[
L' = \left\{X=\begin{pmatrix}b/2 & c \\ -a & -b/2 \end{pmatrix}: a,b,c \in \Z \right\}.
\]
The group $\Gamma = \PSL_{2}(\Z)$ acts on $L$ via $g.X = gXg^{-1}$. We can view $L'$ as the set of integral binary quadratic forms $[a,b,c]$, where $-4q(X)=b^2-4ac$ corresponds to the discriminant.

Let $\Gr(L)$ be the Grassmannian of positive definite lines in $V(\R) = L \otimes \R$. We can identify $\Gr(L)$ with $\H$ by sending $z = x+iy \in \H$ to the positive line generated by
\[
X(z) = \frac{1}{\sqrt{2}y}\begin{pmatrix}-x & |z|^2 \\ -1 & x \end{pmatrix}.
\]
This bijection is compatible with the actions of $\PSL_2(\R)$, in the sense that $g.X(z) = X(gz)$ for any $g \in \PSL_2(\R)$. We also consider the vectors
\[
 U_1(z) = \frac{1}{\sqrt{2}y}\begin{pmatrix}x & -x^2+y^2 \\ 1 & -x \end{pmatrix}, \qquad U_2(z) = \frac{1}{\sqrt{2}y}\begin{pmatrix}y & -2xy \\ 0 & -y \end{pmatrix},
\]
which, together with $X(z)$, form an orthogonal basis of $V(\R)$. For each fixed $z = x+iy \in \H$ we define the polynomials
\begin{align*}
Q_X(z) &= -\sqrt{2}y(X,U_1(z)+iU_2(z)) = az^2 + bz + c, \\
p_X(z) &= \sqrt{2}(X,X(z)) = \frac{1}{y}(a|z|^2 + bx + c),
\end{align*}
on $V(\R)$. They satisfy the invariances
\begin{align}\label{eq invariances}
	p_{X}(g z) = p_{g^{-1}.X}(z), \qquad Q_{X}(gz) = (cz+d)^{-2}Q_{g^{-1}.X}(z),
\end{align}
for each $g = \left(\begin{smallmatrix}a & b \\ c & d \end{smallmatrix} \right) \in \PSL_2(\R)$. For brevity we write $X_{z}$ and $X_{z^\perp}$ for the orthogonal projection of $X$ to the positive line generated by $X(z)$ and to the negative plane $X(z)^\perp$ spanned by $U_1(z),U_2(z)$, respectively. We have the useful relations
\begin{align}\label{eq relations}
q(X_z) = \frac{1}{4}p_X(z)^2, \qquad q(X_{z^\perp}) = -\frac{1}{4}y^{-2}|Q_X(z)|^2.
\end{align}

Since $L$ has signature $(1,2)$, the Siegel theta function
\[
\Theta_{L}(\tau,z) = v \sum_{X \in L'}e\left(q(X_z)\tau + q(X_{z^\perp})\overline{\tau} \right)\e_X
\]
transforms like a modular form of weight $-1/2$ in $\tau$ for the Weil representation $\rho_L$ associated with $L$. Moreover, it is $\Gamma$-invariant in $z$. Here $\e_X = \e_{X + L}$ are the standard basis vectors of the group ring $\C[L'/L]$. The derivative $R_0\Theta_{L}(\tau,z) = 2i\frac{\partial}{\partial z}\Theta_L(\tau,z)$ transforms like a modular form of weight $2$ in $z$. It is explicitly given by
\begin{align}\label{eq raised theta}
R_0\Theta_L(\tau,z) = 2\pi v^2 \sum_{X \in L'}p_X(z) y^{-2}\overline{Q_X(z)}e\left(q(X_z)\tau + q(X_{z^\perp})\overline{\tau} \right)\e_X.
\end{align}

Let $A_{k,L}$ denote the space of all $\C[L'/L]$-valued functions on $\H$ which transform like modular forms of weight $k$ for $\rho_L$. If $M \subset L$ is a sublattice of finite index, we have natural maps $A_{k,L} \to A_{k,M}, f \mapsto f_M$, and $A_{k,M} \to A_{k,L}, g \mapsto g^L$, given by
\begin{align}\label{eq operators}
(f_{M})_\gamma = \begin{cases}f_{\gamma} & \text{if $\gamma \in L'/M$,} \\
0, & \text{if $\gamma \notin L'/M$,}
\end{cases} \qquad \text{and} \qquad (g^L)_\gamma = \sum_{\beta \in L/M}g_{\beta+\gamma}.
\end{align}
These maps are adjoint with respect to the bilinear pairings on $\C[L'/L]$ and $\C[M'/M]$. Moreover, the Siegel theta functions for $M$ and $L$ are related by $\Theta_L(\tau,z) = \Theta_M(\tau,z)^L$.

For a smooth modular form $f$ of weight $1/2$ for $\overline{\rho}_L$ the regularized theta lift is defined by
\[
\Phi_{L}(f,z) =  \int_{\mathcal{F}}^{\reg} f(\tau)\cdot \Theta_{L}(\tau,z) d\mu(\tau),
\]
where $\mathcal{F}$ is a fundamental domain for $\Gamma \backslash \H$, the dot denotes the natural bilinear pairing on $\C[L'/L]$, and $d\mu(\tau)$ is the invariant measure. The integral is regularized as explained in \cite{bruinierhabil}. 

We let $P_{3/2-k,d}(\tau)$ be the unique harmonic Maass form of weight $3/2-k$ for the dual Weil representation $\overline{\rho}_L$ with principal part $q^{d}\e_d+O(1)$.

\begin{proposition}[Proposition~4.1 in \cite{alfesbruinierschwagenscheidt}]\label{proposition theta lift} For odd $k\geq 3$ we have
\[
f_{k,d}(z) = c_k \cdot  R_{0}^k\Phi_{L}\left(R_{3/2-k}^{(k-1)/2}P_{3/2-k,d},z\right)
\]
with the constant $c_k = (2^{k+1}\pi^{(k+1)/2}(k-1)!)^{-1}$. Here $R_{\kappa}^n = R_{\kappa + 2n-2} \circ \dots \circ R_{\kappa}$ is an iterated version of the raising operator $R_{\kappa} = 2i\frac{\partial}{\partial z}+\kappa y^{-1}$.
\end{proposition}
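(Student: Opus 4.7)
The plan is to derive the right-hand side's Fourier expansion in $z$ and match it against the defining series of $f_{k,d}$, using the standard Bruinier--Funke unfolding paradigm. First I would interchange the outer raising operator $R_0^k$ with the regularized $\tau$-integral, which is justified because \eqref{eq raised theta} and its iterates are dominated componentwise by Gaussians in $\tau$, so the regularization from \cite{bruinierhabil} extends termwise to each $z$-derivative. The integrand then becomes $g(\tau)\cdot R_{0,z}^k \Theta_L(\tau,z)\,d\mu(\tau)$, where $g = R_{3/2-k}^{(k-1)/2} P_{3/2-k,d}$ has weight $1/2$ in $\tau$.

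The key computational step is to prove, by induction on $k$ starting from \eqref{eq raised theta} and using \eqref{eq relations}, an identity of the shape
\[
R_{0,z}^k \Theta_L(\tau,z) \;=\; \alpha_k \cdot R_\tau^{(k-1)/2}\bigl[\Theta_L^{(k)}(\tau,z)\bigr],
\]
where $\alpha_k$ is an explicit nonzero constant and $\Theta_L^{(k)}(\tau,z)$ is a weight-$1/2$ theta kernel in $\tau$ whose Schwartz function is polynomially weighted by $\overline{Q_X(z)}^{k}$. This is a manifestation of the dual pair compatibility between the $z$- and $\tau$-side raising operators for the Siegel theta kernel: each application of $R_{0,z}$ contributes one factor $\overline{Q_X(z)}$ from differentiating the Gaussian, plus a term that is absorbed into an application of $R_\tau$ on the $\tau$-side. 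Given this, self-adjointness of the raising operators (with vanishing boundary terms at the cusp, since $P_{3/2-k,d}$ has principal part $q^d\e_d + O(1)$ with $d \neq 0$) lets me shift $R_{3/2-k}^{(k-1)/2}$ off $P_{3/2-k,d}$ and onto $\Theta_L^{(k)}$, cancelling with $R_\tau^{(k-1)/2}$ and reducing the whole expression to $\alpha_k \cdot \int^{\reg} P_{3/2-k,d}(\tau) \cdot \Theta_L^{(k)}(\tau,z)\, d\mu(\tau)$.

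I would then unfold this regularized integral against the Poincar\'e-series realization of $P_{3/2-k,d}$, isolating the $d$-th Fourier coefficient (at the component $\e_d$) of $\Theta_L^{(k)}$ in $\tau$. This is a finite sum over $X \in L'$ with $q(X) = d$, weighted by $\overline{Q_X(z)}^{k}$ times a standard $\Gamma(k)$-type integral in $v$ arising from the non-holomorphic factor $e(q(X_{z^\perp})\overline{\tau})$. Evaluating the $v$-integral via \eqref{eq relations} produces a factor proportional to $y^{2k}/|Q_X(z)|^{2k}$, so each summand collapses to a multiple of $\overline{Q_X(z)}^k/|Q_X(z)|^{2k} = 1/Q_X(z)^{k}$; summing over $X$ with $q(X) = d$ reproduces the series $\sum_{Q \in \calQ_d} Q(z,1)^{-k}$ defining $f_{k,d}$. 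The main obstacle is the precise polynomial identity in the middle step together with the bookkeeping of constants across the $k$ derivatives, the $(k-1)/2$ integrations by parts, and the final $\Gamma$-integral, which must combine to reproduce exactly the stated value $c_k = (2^{k+1}\pi^{(k+1)/2}(k-1)!)^{-1}$.
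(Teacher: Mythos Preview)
The paper does not prove this proposition; it is quoted verbatim as Proposition~4.1 of \cite{alfesbruinierschwagenscheidt} and used as a black box. So there is no ``paper's own proof'' to compare against here.

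Your sketch is a reasonable outline of how such identities are established in the literature (Borcherds, Bruinier--Funke, and the cited paper all follow variants of this scheme). One point deserves more care: the ``cancellation'' you describe in the middle step is not literally correct as stated. Moving $R_{3/2-k}^{(k-1)/2}$ off $P_{3/2-k,d}$ via Stokes' theorem produces \emph{lowering} operators on the other factor, not inverse raising operators, so it does not simply undo the $R_\tau^{(k-1)/2}$ sitting on $\Theta_L^{(k)}$. What actually happens is that the composite $L^{(k-1)/2}R^{(k-1)/2}$ acts on $\Theta_L^{(k)}$ as a polynomial in the Laplacian, and since the theta kernel (or the Poincar\'e series) is a Laplace eigenfunction this contributes only a scalar. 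Alternatively, and closer to what the cited paper does, one bypasses this manipulation entirely: $R_{3/2-k}^{(k-1)/2}P_{3/2-k,d}$ is itself a Maass--Poincar\'e series of weight $1/2$ with known spectral parameter, so one unfolds directly against the raised theta kernel $R_{0,z}^k\Theta_L$ and evaluates the resulting $v$-integral. Your final paragraph is essentially this computation, and it is correct; the preceding reduction is where the bookkeeping is looser than you suggest.
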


\section{Cycle integrals of Siegel theta functions}\label{section cycle integral siegel theta}

 In this section we compute the cycle integrals of the derivative of the Siegel theta function $\Theta_L(\tau,z)$. Throughout this section we let
\[
A = \begin{pmatrix}b/2 & c \\ -a & -b/2 \end{pmatrix} \in L', \qquad D = -4q(A) = b^2-4ac > 0,
\]
be an indefinite integral binary quadratic form of positive non-square discriminant $D$. We let $S_A = \{z \in \H \, : \, a|z|^2 + bx + c = 0\}$ be the corresponding geodesic semi-circle in $\H$. Changing $A$ to $-A$ only changes the sign of the cycle integral, so we will assume $a > 0$ from now on. Then $S_A$ is oriented counter-clockwise.
Since $A$ is as a vector of negative norm in $L'$, we can define two sublattices
\[
I = L \cap (\Q A)^\perp, \qquad N = L \cap (\Q A),
\]
of signature $(1,1)$ and $(0,1)$, respectively. Note that $I$ is anisotropic since $A$ has non-square discriminant, and that $N^- = (N,-q)$ is a positive definite one-dimensional lattice. We define the weight $3/2$ unary theta function corresponding to $N$ (or rather $N^-$) by
\begin{align}\label{eq holomorphic theta function}
\Theta_{3/2,N}(\tau) = \sum_{W \in N'}(W,A)e(-q(W)\tau) \e_W.
\end{align}
It is a cusp form of weight $3/2$ for the dual Weil representation $\overline{\rho}_N$.

Let $w = \frac{-b-\sqrt{D}}{2a}$ be the real endpoint of the geodesic $S_A$, and consider the vector 
\begin{align}\label{eq Y0}
Y_0 = \begin{pmatrix} 
	-w & w^2 \\ -1 & w
\end{pmatrix} \in I \otimes \R.
\end{align}
We define the \emph{Hecke theta series} associated with the signature $(1,1)$ lattice $I$ by
\begin{align}\label{eq hecke theta series}
\vartheta_I(\tau) = \sum_{\substack{Y \in \Gamma_I \backslash I' \\ q(Y) > 0}}\frac{\sgn(Y,Y_0)}{|\Gamma_Y|}e(q(Y)\tau)\e_Y.
\end{align}
It is a cusp form of weight $1$ for $\rho_I$, which was first constructed by Hecke \cite{hecke} as a theta lift.

\begin{theorem}\label{theorem cycle integral siegel theta}
	We have
	\[
	\mathcal{C}_A\big(R_{0}\Theta_{L}(\tau,\,\cdot\,) \big) = -\frac{4\pi}{\sqrt{D}}\left(\vartheta_I(\tau) \otimes \overline{\Theta_{3/2,N}(\tau)}v^{3/2}\right)^L, 
	\]
	with the map $f^L$ defined in \eqref{eq operators}.
\end{theorem}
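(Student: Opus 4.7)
My approach would follow the hint in the introduction: $\Theta_L$ essentially splits as a tensor product $\Theta_I \otimes \Theta_N$ along $S_A$, and the cycle integral of the $I$-part yields Hecke's theta series. The first step is to reduce to the case $L = M := I \oplus N$ (a sublattice of $L$ of finite index). Since $\Theta_L = (\Theta_M)^L$ and the operator $(\cdot)^L$ commutes with both $R_0$ and the cycle integral, this is harmless; I focus on $L = M$ below.

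For $z \in S_A$ the positive line $X(z)$ lies in $I \otimes \R$, because $p_A(z)$ is proportional to $a|z|^2 + bx + c = 0$ on $S_A$. Writing any $X \in M' = I' \oplus N'$ as $X = Y + W$, one verifies $p_X(z) = p_Y(z)$, $q(X_z) = q(Y_z)$, and $q(X_{z^\perp}) = q(Y_{z^\perp}) + q(W)$, while decomposing $U_1(z) + iU_2(z)$ into components along $A$ and along $I \otimes \R$ gives
\[
Q_X(z) = Q_Y(z) - \frac{2(W, A)}{D}Q_A(z).
\]
Substituting into \eqref{eq raised theta} decomposes $R_0\Theta_M|_{S_A}$ into two pieces, splitting the cycle integral into a term $\mathcal{I}_1$ carrying $\overline{Q_Y}$ and a term $\mathcal{I}_2$ carrying $\overline{Q_W} = -\frac{2(W,A)}{D}\overline{Q_A}$ (whose $W$-sum produces $\overline{\Theta_{3/2,N}(\tau)}$).

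To evaluate these I would parameterize $S_A$ by $z = \sigma_A(it)$ with $\sigma_A \in \SL_2(\R)$ mapping the imaginary axis to $S_A$, and set $s := \log t$, $u := (Y, X(z))$. A direct computation yields
\[
\overline{Q_A(z)}\,y^{-2}\,dz = -\sqrt{D}\,ds \qquad \text{and} \qquad Q_Y(z) = -\frac{i\sqrt{2}}{\sqrt{D}}Q_A(z)\,(Y, e_\perp(z))
\]
on $S_A$, where $e_\perp(z) \in I \otimes \R$ is the unit negative vector orthogonal to $X(z)$; the chain rule gives $(Y, e_\perp(z)) = du/ds$. After unfolding the $\Gamma_A$-sum over $Y$ (using $\Gamma_A = \Gamma_I$ in $\PSL_2(\Z)$, with generically trivial stabilizer $\Gamma_{A,Y}$), both pieces reduce to integrals over $s \in \R$. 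For $\mathcal{I}_1$ the $Y$-integrand collapses to the exact differential
\[
p_Y\,y^{-2}\overline{Q_Y}\,e(q(Y_z)\tau + q(Y_{z^\perp})\overline\tau)\,dz = \frac{i}{2\pi v}\,e(q(Y)\overline\tau)\,d\!\left[e^{-2\pi v u^2}\right],
\]
whose integral over $s \in \R$ vanishes because $u^2 \to \infty$ at both ends (corresponding to the real endpoints $w, w'$ of $S_A$) forces $e^{-2\pi v u^2}$ to vanish there. This is the main technical step.

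For $\mathcal{I}_2$ the analogous reduction leads to $-\sqrt{2D}\,e(q(Y)\overline\tau)\int_{-\infty}^\infty u\,e^{-2\pi v u^2}\,ds$, which I evaluate via the hyperbolic substitution $u = \sqrt{2q(Y)}\cosh\phi$ for $q(Y) > 0$ (the integrand is odd and hence vanishes when $q(Y) < 0$). The Gaussian integral produces $\pm\sqrt{D/v}\,\sgn(Y, Y_0)\,e(q(Y)\tau)$; combined with the prefactor $-\frac{2(W,A)}{D}$, the $Y$-sum over $\Gamma_I$-orbits (yielding $\vartheta_I(\tau)$), and the $W$-sum (yielding $\overline{\Theta_{3/2,N}(\tau)}$), the constants assemble into the claimed $-\frac{4\pi}{\sqrt{D}}$, and applying $(\cdot)^L$ returns to general $L$. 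The principal obstacles are: verifying the exactness claim for $\mathcal{I}_1$ via the precise identities above; matching $\sgn(u)$ with $\sgn(Y, Y_0)$ (where $Y_0 \in I \otimes \R$ is the isotropic vector associated with the endpoint $w$); fixing the orientation of $S_A$; and accounting for the stabilizer factor $|\Gamma_Y|$ in $\vartheta_I$ when $|\Gamma_{A,Y}| > 1$.
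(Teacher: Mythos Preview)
Your proposal is correct and follows essentially the same route as the paper: reduce to $L=I\oplus N$, split $R_0\Theta_L|_{S_A}$ into your two pieces $\mathcal I_1,\mathcal I_2$ (which the paper writes as the Siegel theta functions $\Theta_I^*$ and $Q_A^{-1}\Theta_I$), show the first vanishes as the integral of an exact differential, and evaluate the second as an explicit Gaussian-type integral after parametrizing $S_A$ by $z=\sigma(it)$. The only cosmetic difference is that the paper uses the explicit coordinates $\lambda,\lambda'$ coming from $\sigma^{-1}.Y$ together with a $K_{1/2}$-Bessel identity, whereas you phrase the same computation via the intrinsic coordinate $u=(Y,X(z))$ and a hyperbolic substitution.
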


\begin{proof}
	Note that $I \oplus N$ is a sublattice of finite index in $L$, and we have 
\[
\mathcal{C}_A(R_{0}\Theta_{L}) = \mathcal{C}_A(R_{0}\Theta_{I \oplus N})^L,
\] 
	so we can assume $L = I \oplus N$ for now. We first split $R_{0}\Theta_{I \oplus N}$ along the geodesic $S_A$ into tensor products of theta functions associated with $I$ and $N$ . We can write $X \in (I\oplus N)'$ uniquely as $X = Y + W$ with $Y \in I'$ and $W \in N'$. Note that $W = \frac{(W,A)}{(A,A)}A$. Hence, for $z \in S_A$ we have
	\[
	p_W(z) = 0, \qquad W_z = 0, \qquad y^{-2}\overline{Q_W(z)} = -2(W,A) Q_A(z)^{-1}.
	\]
	Plugging this into the explicit formula \eqref{eq raised theta} for $R_{0}\Theta_{I \oplus N}$, we obtain for $z \in S_A$ the splitting
	\begin{align*}
	R_0\Theta_{I \oplus N}(\tau,z) &= 2\pi \left(\Theta_I^*(\tau,z)\otimes \overline{\Theta_{1/2,N}(\tau)}v^{1/2}\right) - 4\pi  \left(Q_A(z)^{-1}\Theta_I(\tau,z) \otimes \overline{\Theta_{3/2,N}(\tau)}v^{3/2}\right),
	\end{align*}
	with the Siegel theta functions
	\begin{align*}
	\Theta_I^*(\tau,z) &= v^{3/2}\sum_{Y \in I'}p_{Y}(z)y^{-2}\overline{Q_Y(z)}e(q(Y_{z})\tau + q(Y_{z^{\perp}})\overline{\tau})\e_{Y}, \\
	\Theta_I(\tau,z) &= v^{1/2}\sum_{Y \in I'}p_{Y}(z)e(q(Y_{z})\tau + q(Y_{z^{\perp}})\overline{\tau})\e_{Y},
	\end{align*}
	and the holomorphic weight $1/2$ and weight $3/2$ theta functions associated with $N$.
	
	It remains to compute the cycle integrals of $\Theta_I^*(\tau,z)$ and $Q_A(z)^{-1}\Theta_I(\tau,z)$. Note that the geodesic $S_A$ can be identified with the Grassmannian $\Gr(I)$ of positive lines in $I \otimes \R$. Hence, the cycle integrals can be computed as done by Hecke \cite{hecke}. We give the details for $\Theta_I(\tau,z)$.
	
	Recall that we assume $a > 0$. The two real endpoints $w < w'$ of $S_A$ are given by $w = \frac{-b-\sqrt{D}}{2a}$ and $w' = \frac{-b+\sqrt{D}}{2a}$. The matrix
\[
\sigma = \frac{a^{\frac{1}{2}}}{D^{\frac{1}{4}}}\begin{pmatrix}w' & w \\ 1 & 1 \end{pmatrix} \in \PSL_{2}(\R)
\]
maps $0$ to $w$ and $i\infty$ to $w'$ and hence maps the imaginary axis (oriented from $i\infty$ to $0$) to $S_{A}$ (oriented counter-clockwise). Note that $\sigma^{-1}. A = [0,-\sqrt{D},0]$. Modding out the stabilizer $\Gamma_A = \Gamma_I$ in the sum over $Y$, and using the parametrization $z = \sigma.it$ of $S_A$, we obtain
\[
\int_{\Gamma_A \backslash S_A}\Theta_I(\tau,z)\frac{dz}{Q_A(z)} = \frac{v^{1/2}}{\sqrt{D}}\int_{0}^\infty\sum_{Y \in \Gamma_I \backslash I'}\frac{1}{|\Gamma_Y|}p_{Y}(\sigma.it)e(q(Y_{\sigma.it})\tau + q(Y_{\sigma.it^{\perp}})\overline{\tau})\e_{Y}\frac{dt}{t}.
\]
	If we write $Y = \left(\begin{smallmatrix}\beta/2 & \gamma \\ -\alpha & -\beta/2 \end{smallmatrix} \right) \in I'$, a short compuation shows that
	\[
	\sigma^{-1}.Y = \begin{pmatrix}0 & \lambda \\ \lambda' & 0 \end{pmatrix}, \qquad \lambda = \frac{a}{\sqrt{D}}(\alpha w^2 + \beta w + \gamma) \in \Q\big(\sqrt{D}\big),
	\]
	where $\lambda'$ is the conjugate of $\lambda$ in $\Q(\sqrt{D})$. Using \eqref{eq invariances} and \eqref{eq relations} we find
	\begin{align*}
	p_{Y}(\sigma.it) &= p_{\sigma^{-1}.Y}(it) =  \lambda t^{-1}-\lambda' t, \\
	q(Y_{\sigma.it^\perp}) &= q((\sigma^{-1}.Y)_{it^\perp}) = -\frac{1}{4}(\lambda t^{-1}+\lambda' t)^2.
	\end{align*}
	Hence, using $q(Y_z)\tau + q(Y_{z^\perp})\overline{\tau} = q(Y)\tau - 2iv q(Y_{z^{\perp}})$, we get
	\begin{align*}
		\int_{\Gamma_A \backslash S_A}\Theta_I(\tau,z)\frac{dz}{Q_A(z)} = \frac{v^{1/2}}{\sqrt{D}}\sum_{Y \in \Gamma_I \backslash I'}\frac{1}{|\Gamma_Y|}\left(\int_{0}^{\infty}(\lambda t^{-1}-\lambda' t) e^{-\pi v (\lambda t^{-1} + \lambda' t)^2}\frac{dt}{t}\right)e(q(Y) \tau)\e_{Y}.
	\end{align*}
	A standard computation\footnote{For example, one can use the $K$-Bessel function $\sqrt{\frac{\pi}{2}}\frac{e^{-x}}{\sqrt{x}} = K_{1/2}(x) = \frac{1}{2}\sqrt{\frac{1}{2}z}\int_{0}^\infty \exp\left(-t - \frac{x^2}{4t}\right)\frac{dt}{t^{3/2}}$.} shows that the integral is given by
	\[
	\int_{0}^\infty (\lambda t^{-1}-\lambda' t) e^{-\pi v (\lambda t^{-1} + \lambda' t)^2}\frac{dt}{t} = \frac{1}{2}v^{-1/2}(\sgn(\lambda)-\sgn(\lambda'))e^{-2\pi v|\lambda\lambda'|}e^{-2\pi v \lambda\lambda'},
	\]
	which equals $v^{-1/2}\sgn(\lambda)$ if $q(Y)  = -\lambda\lambda' > 0$ and vanishes otherwise. Using $\sqrt{D}\lambda=a(\alpha w^2 + \beta w + \gamma) = a(Y,Y_0)$,	with $Y_0$ as in \eqref{eq Y0}, we get $\sgn(\lambda)=\sgn(Y,Y_0)$. Summarizing, we obtain
	\begin{align*}
	\int_{\Gamma_A \backslash S_A}\Theta_I(\tau,z)\frac{dz}{Q_A(z)}  &= \frac{1}{\sqrt{D}} \sum_{\substack{Y \in \Gamma_I \backslash I' \\ q(Y) > 0}}\frac{\sgn(Y,Y_0)}{|\Gamma_Y|}e(q(Y) \tau)\e_{Y} = \frac{1}{\sqrt{D}}\vartheta_I(\tau).
	\end{align*}
	
	The computation of the cycle integral of $\Theta_I^*(\tau,z)$ is very similar. Using
	\[
	\int_{0}^\infty (\lambda^2 t^{-2}-\lambda'^2 t^2)e^{-\pi v (\lambda t^{-1} + \lambda' t)^2} \frac{dt}{t} = \left[\frac{1}{2\pi v}e^{-\pi v (\lambda t^{-1} + \lambda' t)^2} \right]_0^{\infty} = 0,
	\]
	we see that we in fact have $\mathcal{C}_A(\Theta_I^*) = 0$. This finishes the proof.
\end{proof}

\begin{remark}
	One can compute the cycle integrals of $R_0 \Theta_{L}(\tau,z)$ along infinite geodesics (if $D$ is a square) in a similar way. In this case, the lattice $I$ is isotropic, and $\vartheta_I$ has to be replaced with a weight $1$ Hecke Eisenstein series.
\end{remark}

\section{Cycle integrals of meromorphic modular forms: The proof of Theorem~\ref{main theorem intro}}\label{section cycle integral meromorphic}

We are now ready to give an explicit evaluation of the cycle integrals appearing in Theorem~\ref{main theorem intro}. As before,  we let $A \in L'$ be an indefinite integral binary quadratic form of non-square discriminant $D = -4q(A) > 0$ with $a > 0$, and we let $I = L \cap (\Q A)^\perp$ and $N = L \cap (\Q A)$ be the corresponding indefinite and negative definite sublattices of $L$. Moreover, we let $\vartheta_I$ be the Hecke theta series of weight $1$ for $\rho_I$ defined in \eqref{eq hecke theta series} and $\Theta_{3/2,N}$ the holomorphic theta function of weight $3/2$ for $\overline{\rho}_{N}$ as in \eqref{eq holomorphic theta function}. 

We choose a harmonic Maass form $\widetilde{\Theta}_{3/2,N}$ of weight $1/2$ for $\rho_{N}$ with 
\[
\xi_{1/2}\widetilde{\Theta}_{3/2,N} = \frac{1}{\sqrt{D}}\Theta_{3/2,N},
\] 
and we let $\widetilde{\Theta}_{3/2,N}^+$ be its holomorphic part. Moreover, we require the Rankin-Cohen bracket
\[
\left[f,g \right]_{n} = \sum_{s = 0}^n (-1)^s \binom{\kappa+n-1}{s}\binom{\ell + n-1}{n-s}f^{(n-s)}g^{(s)}, \qquad \left(f^{(s)} = \frac{1}{(2\pi i )^s}\frac{\partial^s}{\partial \tau^s}f \right),
\]
 which maps modular forms $f$ of weight $\kappa$ and $g$ of weight $\ell$ to a form of weight $\kappa + \ell + 2n$.

\begin{theorem}\label{main theorem}
	Let $k \geq 3$ be odd and let $g \in M_{3/2-k,\overline{\rho}_L}^!$ be a weakly holomorphic modular form of weight $3/2-k$ for $\overline{\rho}_L$. Then we have
	\[
	\mathcal{C}_A\left(\sum_{d < 0}a_g(d)f_{k,d}\right) = -(4D)^{(k-1)/2}\CT\left(g_{I \oplus N} \cdot \left[\vartheta_I,\widetilde{\Theta}_{3/2,N}^+ \right]_{(k-1)/2} \right),
	\]
	with the Rankin-Cohen bracket in weights $\kappa = 1$ and $\ell = 1/2$, and $g_{I \oplus N}$ defined in \eqref{eq operators}.
\end{theorem}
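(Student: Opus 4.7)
The strategy is to express $\mathcal{C}_A\bigl(\sum a_g(d) f_{k,d}\bigr)$ as a regularized Petersson integral via the theta-lift realization of Proposition~\ref{proposition theta lift}, evaluate the cycle integral of $R_0^k\Theta_L$ using Theorem~\ref{theorem cycle integral siegel theta} as the base case, and finally apply a Bruinier-Funke-type pairing to extract a constant term. This mirrors the CM-value argument in \cite[proof of Theorem 5.4]{bruinierehlenyang}. Since $k\geq 3$ makes the weight $3/2-k$ negative, no non-zero holomorphic forms of that weight exist, so the weakly holomorphic $g$ is determined by its principal part and equals $\sum_{d<0} a_g(d)\, P_{3/2-k,d}$. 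Proposition~\ref{proposition theta lift} and linearity then yield $\sum_{d<0} a_g(d) f_{k,d} = c_k R_0^k \Phi_L\bigl(R_{3/2-k}^{(k-1)/2} g\bigr)$. Pulling $R_0^k$ inside the regularized integral and interchanging the cycle integral in $z$ with the $\tau$-integration gives
\[
\mathcal{C}_A\Bigl(\sum_{d<0} a_g(d) f_{k,d}\Bigr) \;=\; c_k \int^{\reg}_{\mathcal{F}} \bigl(R_{3/2-k}^{(k-1)/2} g\bigr)(\tau) \cdot \mathcal{C}_A\bigl(R_0^k \Theta_L(\tau,\cdot)\bigr)\, d\mu(\tau).
\]

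The core technical step is the evaluation of $\mathcal{C}_A(R_0^k\Theta_L)$. Assuming first that $L = I\oplus N$ (the general case is recovered via the map $(\cdot)^L$), the Siegel theta splits along $S_A$ into a product of an $I$-theta depending on $z$ and an $N$-theta that is $z$-independent, so $R_0^k$ affects only the $I$-factor. Re-running the proof of Theorem~\ref{theorem cycle integral siegel theta} with $R_0^k\Theta_I$ in place of $R_0\Theta_I$, the substitution $z=\sigma.it$ reduces the cycle integral to Gaussian moment integrals of the form $\int_0^\infty P_k(\lambda t^{-1},\lambda' t)\exp\bigl(-\pi v(\lambda t^{-1}+\lambda' t)^2\bigr)\,dt/t$ for explicit polynomials $P_k$, which yield an identity expressing $\mathcal{C}_A(R_0^k\Theta_L)$ as a concrete $\tau$-differential operator applied to $\vartheta_I\otimes\overline{\Theta_{3/2,N}}v^{3/2}$. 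One then transfers $R_{3/2-k}^{(k-1)/2}$ from $g$ onto this factor by iterated adjointness of raising and lowering operators in the regularized integral, with vanishing boundary contributions thanks to the cuspidality of $\vartheta_I$ and $\Theta_{3/2,N}$.

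Using $\xi_{1/2}\widetilde{\Theta}_{3/2,N}=\frac{1}{\sqrt{D}}\Theta_{3/2,N}$ and the classical identity that rewrites such combined iterated raisings/lowerings of a product of a holomorphic form and a harmonic Maass form as a Rankin-Cohen bracket of the holomorphic factor with the harmonic preimage, the integrand reassembles into $g_{I\oplus N}$ paired against a weight $k+1/2$ harmonic Maass form whose holomorphic part is $[\vartheta_I,\widetilde{\Theta}_{3/2,N}^+]_{(k-1)/2}$. The Bruinier-Funke pairing for weakly holomorphic forms then evaluates the resulting regularized integral as the constant term $\CT\bigl(g_{I\oplus N}\cdot[\vartheta_I,\widetilde{\Theta}_{3/2,N}^+]_{(k-1)/2}\bigr)$, and tracking all the constants produces the prefactor $-(4D)^{(k-1)/2}$. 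The main obstacle will be the evaluation of $\mathcal{C}_A(R_0^k\Theta_L)$: identifying the precise $\tau$-side differential operator arising from the Gaussian moment integrals and verifying that, after the adjointness step, the Rankin-Cohen bracket $[\vartheta_I,\widetilde{\Theta}_{3/2,N}^+]_{(k-1)/2}$ emerges with exactly the constant $(4D)^{(k-1)/2}$ required.
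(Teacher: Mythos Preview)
Your overall architecture matches the paper's: theta-lift realization, interchange with the cycle integral, adjointness to move the half-integral raising operators, rewrite via a Rankin--Cohen bracket with the harmonic preimage $\widetilde{\Theta}_{3/2,N}$, and finish with Stokes/Bruinier--Funke. The one substantive divergence is in how you dispose of $R_0^k$.

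You propose to push $R_0^k$ onto $\Theta_L$ and then evaluate $\mathcal{C}_A(R_0^k\Theta_L)$ directly by ``re-running'' Theorem~\ref{theorem cycle integral siegel theta}. The paper does something cleaner: it first observes that $\Phi_L(R_{3/2-k}^{(k-1)/2}g,\,\cdot\,)$ is a $\Delta_0$-eigenfunction with eigenvalue $k(1-k)$ and then applies the identity of \cite[Theorem~1.1]{alfesschwagenscheidtidentities} for cycle integrals of Laplace eigenforms to collapse $\mathcal{C}_A(R_0^k\Phi_L)$ to a constant times $\mathcal{C}_A(R_0\Phi_L)$. Only after this reduction does it swap the order of integration and invoke Theorem~\ref{theorem cycle integral siegel theta} for $R_0\Theta_L$. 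This sidesteps any higher Gaussian moment computation and immediately produces $R_1^{(k-1)/2}\vartheta_I\otimes\overline{\Theta_{3/2,N}}v^{3/2}$ on the $\tau$-side (via the adjointness step and holomorphy of $\Theta_{3/2,N}$), which \cite[Proposition~3.6]{bruinierehlenyang} converts into $L_{k+1/2}[\vartheta_I,\widetilde{\Theta}_{3/2,N}]_{(k-1)/2}$.

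Your route is not wrong in spirit, but the justification you give contains a slip: you assert that along $S_A$ the Siegel theta splits as an $I$-factor depending on $z$ times a $z$-independent $N$-factor, ``so $R_0^k$ affects only the $I$-factor''. That splitting is only valid \emph{on} $S_A$; the operator $R_0 = 2i\partial_z$ differentiates transversally and does see the $N$-contribution. Indeed, already for $k=1$ the proof of Theorem~\ref{theorem cycle integral siegel theta} produces \emph{two} terms on $S_A$ (one involving $\Theta_{1/2,N}$, one involving $\Theta_{3/2,N}$), and the first has to be shown to vanish separately. For general $k$ the analogous expansion of $R_0^k\Theta_L\big|_{S_A}$ has many terms, and identifying which survive and assembling them into the correct $\tau$-differential of $\vartheta_I$ would be considerably more work than you indicate. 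The paper's Laplace-eigenvalue reduction is precisely the device that eliminates this difficulty.
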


\begin{proof} 
We follow the idea of the proof of \cite[Theorem~5.4]{bruinierehlenyang}. For brevity we put $f_{k,g} = \sum_{d < 0}a_g(d)f_{k,d}$. Using Proposition~\ref{proposition theta lift} we can write $f_{k,g}$ as a theta lift,
\[
f_{k,g}(z) = c_k\cdot R_{0}^{k}\Phi_L\left(R_{3/2-k}^{(k-1)/2}g,z\right),
\]
with the constant $c_k = (2^{k+1}\pi^{(k+1)/2}(k-1)!)^{-1}$. Now we take the cycle integral $\mathcal{C}_A$ on both sides. As a preliminary step, we will reduce the power of the outer raising operator $R_0^k$. It is well-known that $\Phi_L = \Phi_L(R_{3/2-k}^{(k-1)/2}g,z)$ is an eigenform of the Laplacian $\Delta_0$ with eigenvalue $k(1-k)$, see \cite[Theorem~4.6]{bruinierhabil}. Hence, a repeated application of \cite[Theorem~1.1]{alfesschwagenscheidtidentities} gives
\[
\mathcal{C}_A\big( R_{0}^k\Phi_L \big) = c_k'\, \mathcal{C}_A\left( R_{0}\Phi_L \right),
\]
with the constant $c_k' = D^{(k-1)/2} (k-1)!^2/(\frac{k-1}{2})!^2$. Interchanging the cycle integral and the regularized integral gives
\[
\mathcal{C}_A\big(f_{k,g}\big) = c_k c_k' \int_{\mathcal{F}}^{\reg}R_{3/2-k}^{(k-1)/2}g(\tau)\cdot \mathcal{C}_A\big(R_0\Theta_L(\tau,\, \cdot \,)\big)d\mu(\tau).
\]
Now we plug in the evaluation of the cycle integral from Theorem~\ref{theorem cycle integral siegel theta} to get
\[
\mathcal{C}_A\big(f_{k,g}\big) = c_k c_k'c_k''\int_{\mathcal{F}}^{\reg}R_{3/2-k}^{(k-1)/2}g_{I \oplus N}(\tau)\cdot \vartheta_I(\tau)\otimes \overline{\Theta_{3/2,N}(\tau)} v^{3/2}d\mu(\tau),
\]
with the constant $c_k'' = -4\pi/\sqrt{D}$. Here we used that the maps $f^L$ and $g_{I \oplus N}$ are adjoint to each other. Next, we use the ``self-adjointness'' of the raising operator to compute
\begin{align*}
\mathcal{C}_A\big(f_{k,g}\big) 
&= (-1)^{(k-1)/2}c_k c_k'c_k''\int_{\mathcal{F}}^{\reg}g_{I \oplus N}(\tau)\cdot R_{-1/2}^{(k-1)/2}\left(\vartheta_I(\tau)\otimes \overline{\Theta_{3/2,N}(\tau)} v^{3/2}\right)d\mu(\tau) \\
&= (-1)^{(k-1)/2}c_k c_k'c_k''\int_{\mathcal{F}}^{\reg}g_{I \oplus N}(\tau)\cdot \left(R_{1}^{(k-1)/2}\vartheta_I(\tau)\right)\otimes \overline{\Theta_{3/2,N}(\tau)} v^{3/2}d\mu(\tau).
\end{align*}
In the last step we used that $\Theta_{3/2,N}$ is holomorphic. By \cite[Proposition~3.6]{bruinierehlenyang} we have
\[
\left(R_{1}^{(k-1)/2}\vartheta_I(\tau)\right)\otimes \overline{\Theta_{3/2,N}(\tau)} v^{3/2} = c_k'''L_{k+1/2}\left[\vartheta_I, \widetilde{\Theta}_{3/2,N}\right]_{(k-1)/2},
\]
with the constant $c_k''' = 4^{k-1}(-\pi)^{(k-1)/2}\sqrt{D}(\frac{k-1}{2})!^2/(k-1)!$. We arrive at
\[
\mathcal{C}_A\big(f_{k,g}\big)  = (-1)^{(k-1)/2}c_k c_k'c_k''c_k'''\int_{\mathcal{F}}^{\reg}g_{I \oplus N}(\tau)\cdot  L_{k+1/2}\left[\vartheta_I, \widetilde{\Theta}_{3/2,N}\right]_{(k-1)/2} d\mu(\tau).
\]
Note that the constants in front simplify to $(-1)^{(k-1)/2}c_k c_k'c_k''c_k''' = -(4D)^{(k-1)/2}$. Now a standard application of Stokes' Theorem as in \cite[Theorem~5.4]{bruinierehlenyang} finishes the proof.
\end{proof}

We remark that there is a similar formula for the linear combination of cycle integrals of $f_{k,P}$ from Theorem~\ref{theorem abs}, see \cite[Theorem~1.1]{alfesbringmannmalesschwagenscheidt}. We can now prove Theorem~\ref{main theorem intro}.

\begin{proof}[Proof of Theorem~\ref{main theorem intro}]
	We show that the right-hand side in Theorem~\ref{main theorem} is rational. Note that $g$ and $\vartheta_I$ have rational Fourier coefficients. Moreover, by \cite[Theorem~1.1]{lischwagenscheidt} we can choose $\widetilde{\Theta}_{3/2,N}$ such that its holomorphic part has rational Fourier coefficients. Here we use that the lattice $N$ is of the form $(\Z,q(x) = -Dr^2x^2)$ for some rational number $r$. It remains to note that the Rankin-Cohen brackets preserve the rationality of the Fourier coefficients.
\end{proof}

\section{Generalization to higher level}\label{section generalizations}

In \cite[Conjecture~2.5]{loebrichschwagenscheidtcycleintegrals}, Theorem~\ref{main theorem intro} was stated as a conjecture for level $\Gamma_0(N)$ and twisted versions of $f_{k,d}$. Here we sketch the proof of this general conjecture.

Let $N \in \N$. We let $\Delta \in \Z$ be a fundamental discriminant (possibly $1$) and $\rho \in \Z/2N\Z$ such that $\Delta \equiv \rho^2 \pmod{2N}$. Moreover, we let $d < 0$ be a negative integer and $r \in \Z/2N\Z$ with $d \equiv \sgn(\Delta)r^2 \pmod{2N}$. We consider the set $\mathcal{Q}_{N,d|\Delta|,r\rho}$ of (not necessarily positive definite) integral binary quadratic forms $Q = [aN,b,c]$ of discriminant $d|\Delta|$ with $b \equiv r\rho \pmod{2N}$. We let $\chi_\Delta$ be the usual generalized genus character on $\mathcal{Q}_{N,d|\Delta|,r\rho}$. For $k \in \N$ with $k \geq 2$ we define the function
\[
f_{k,d,r,\Delta,\rho}(z) = \frac{|d\Delta|^{k-1/2}}{2\pi}\sum_{Q \in \mathcal{Q}_{N,d|\Delta|,r\rho}}\frac{\sgn(Q)\chi_\Delta(Q)}{Q(z,1)^k},
\]
where we put $\sgn(Q) = \sgn(a)$. It defines a meromorphic modular form of weight $2k$ for $\Gamma_0(N)$. For $N = 1$ and odd $k$ we recover $f_{k,d}$ as $f_{k,d,d,1,1}$.

We consider the signature $(1,2)$ lattice
\[
L_N = \left\{X=\begin{pmatrix}b & c/N \\ -a & -b \end{pmatrix}: a,b,c \in \Z \right\}
\]
with the quadratic form $q_N(X) = N\det(X)$. We have $L_N'/L_N \cong \Z/2N\Z$, so we will write $\e_r$ with $r \in \Z/2N\Z$ for the standard basis elements of $\C[L_N'/L_N]$. Note that the elements of $L_N'$ correspond to binary quadratic forms $[aN,b,c]$, with the discriminant being given by $-4Nq_N(X)$. We let $\widetilde{\rho}_{L_N}$ be equal to $\overline{\rho}_{L_N}$ if $\Delta > 0$, and to $\rho_{L_N}$ if $\Delta < 0$.

We have the following higher level version of Theorem~\ref{main theorem intro}.

\begin{theorem}
	Let $k \in \N$ with $k \geq 2$ and let $g$ be a weakly holomorphic modular form of weight $3/2-k$ for $\widetilde{\rho}_{L_N}$. Suppose that the Fourier coefficients $a_g(d,r)$ for $d < 0$ and $r \in \Z/2N\Z$ are rational. Then the cycle integrals
	\[
	\mathcal{C}_A\left(\sum_{r \in \Z/2N\Z}\sum_{d < 0}a_{g}(d,r)f_{k,d,r,\Delta,\rho}\right)
	\]
	are rational.
\end{theorem}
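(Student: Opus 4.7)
The plan is to follow the three-step strategy of the proof of Theorem~\ref{main theorem intro}, adapted to the twisted higher-level data $(L_N, \chi_\Delta, \rho)$.

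First, I would realize $f_{k,d,r,\Delta,\rho}$ as an iterated raising of a regularized lift against a twisted Siegel theta function $\Theta_{L_N,\Delta,\rho}(\tau,z)$ incorporating the genus character $\chi_\Delta$ and transforming in $\widetilde{\rho}_{L_N}$. This is the higher-level twisted analogue of Proposition~\ref{proposition theta lift}, available within the framework of \cite{alfesbruinierschwagenscheidt}; the factor $\sgn(Q)$ and the choice of $\rho_{L_N}$ versus $\overline{\rho}_{L_N}$ according to $\sgn(\Delta)$ are absorbed into the twisted representation.

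Next, I would establish the twisted analogue of Theorem~\ref{theorem cycle integral siegel theta}. Splitting $L_N$ up to finite index as $I \oplus N$ and restricting to $S_A$, the twisted theta function factors as a tensor product of twisted theta data for $I$ and $N$; the one-dimensional integral computation of Section~\ref{section cycle integral siegel theta} then carries through, yielding a twisted Hecke theta series $\vartheta_{I,\Delta}(\tau)$ of weight $1$ and a twisted unary theta series $\Theta_{3/2,N,\Delta}(\tau)$ of weight $3/2$, both with rational Fourier coefficients. The remaining manipulations from the proof of Theorem~\ref{main theorem}---Laplacian reduction of the outer iterated raising, interchange of cycle and regularized integrals, adjointness of the lattice maps in \eqref{eq operators}, self-adjointness of the raising operator, the Rankin--Cohen identity \cite[Proposition~3.6]{bruinierehlenyang}, and Stokes' theorem---then go through unchanged, producing an explicit formula of the same shape as in Theorem~\ref{main theorem}. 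Rationality of the resulting expression follows from rationality of $g$ and $\vartheta_{I,\Delta}$, together with the rational choice of $\widetilde{\Theta}_{3/2,N,\Delta}^+$ guaranteed by \cite[Theorem~1.1]{lischwagenscheidt} (since $N$ remains a rescaled rank-one lattice). For even $k$ one additionally dualizes the Rankin--Cohen step, pairing against the antiholomorphic part of the Maass form as in \cite{alfesbringmannschwagenscheidt}, but this is structurally identical.

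The main obstacle is the twisted cycle integral computation: one must check that $\chi_\Delta$ factors multiplicatively under the splitting $L_N \supset I \oplus N$ so that the twisted Siegel theta function genuinely splits into a tensor product on $S_A$, and that for each parity of $k$ and each sign of $\Delta$ the signs, representations, and duality pairings assemble correctly to produce the claimed twisted Hecke cusp form and unary theta series. The odd-$k$, $\Delta = 1$ case reduces directly to Theorem~\ref{theorem cycle integral siegel theta}; the remaining cases are combinatorially more involved but structurally identical.
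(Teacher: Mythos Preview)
Your overall three-step strategy matches the paper's, but the paper handles the twist and the even-$k$ case differently from what you propose, and in one place your plan has a real obstacle.

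\textbf{The twist.} You propose to prove a \emph{twisted} analogue of Theorem~\ref{theorem cycle integral siegel theta} by splitting the twisted Siegel theta function $\Theta_{L_N,\Delta,\rho}$ along $S_A$ into twisted theta data for $I$ and $N$, and you correctly flag the obstacle: one must check that $\chi_\Delta$ factors multiplicatively under $L_N \supset I \oplus N$. This is genuinely delicate (the genus character is defined via primes represented by the full form, not by the summands), and the paper does \emph{not} do this. Instead it uses the intertwining operator of \cite{alfesehlen} to identify the twisted theta lift on $L_N$ with an \emph{untwisted} theta lift on a different even lattice of signature $(1,2)$, and then simply observes that the proof of Theorem~\ref{theorem cycle integral siegel theta} never used the particular shape of $L$, so it applies verbatim to that new lattice. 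This sidesteps the factorization issue entirely; your route might be made to work, but would require a separate argument you have not supplied.

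\textbf{Even $k$.} Your description (``dualize the Rankin--Cohen step, pairing against the antiholomorphic part'') is not what happens. The change occurs one step earlier: for even $k$ the theta lift is against the modified Siegel theta function
\[
\Theta_L^*(\tau,z) = v\sum_{X \in L'} p_X(z)\, e\!\left(q(X_z)\tau + q(X_{z^\perp})\overline{\tau}\right)\e_X,
\]
and the cycle integral computation analogous to Theorem~\ref{theorem cycle integral siegel theta} yields $\vartheta_I \otimes \overline{\Theta_{1/2,N}}\,v^{1/2}$, i.e.\ the weight $1/2$ unary theta rather than the weight $3/2$ one. The rest of the argument (raising reduction, Rankin--Cohen, Stokes) then proceeds as before with this substitution. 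So the parity of $k$ affects which Siegel theta kernel and which unary theta appear, not the duality in the Rankin--Cohen step.
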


\begin{proof} This can be proved analogously to Theorem~\ref{main theorem intro}, so we only give a sketch. First, by \cite[Proposition~4.1]{alfesbruinierschwagenscheidt} the meromorphic modular form $f_{k,d,r,\Delta,\rho}$ can be written as a twisted theta lift on the lattice $L_N$. We can reduce to the non-twisted case using an intertwining operator for the involved Weil representations as explained in \cite{alfesehlen}. Then it remains to evaluate the cycle integrals of certain Siegel theta functions associated with a general even lattice $L$ of signature $(1,2)$. However, note that we did not use the precise shape of $L$ in the proof of Theorem~\ref{theorem cycle integral siegel theta}, so the arguments work exactly the same for arbitrary $L$, at least for odd $k$. For even $k$ the theta lift involves a Siegel theta function of the shape
\[
\Theta_{L}^*(\tau,z) = v\sum_{X \in L'}p_z(X)e\left(q(X_z)\tau + q(X_{z^\perp})\overline{\tau} \right)\e_X.
\]
A computation similar to the proof of Theorem~\ref{theorem cycle integral siegel theta} shows that we have
\[
\mathcal{C}_A\big(\Theta_{L}^*(\tau,z) \big) \stackrel{\cdot}{=} \left(\vartheta_I(\tau) \otimes \overline{\Theta_{1/2,N}(\tau)}v^{1/2}\right)^L,
\]
with the holomorphic weight $1/2$ theta function $\Theta_{1/2,N}(\tau) = \sum_{X \in N'}e(-q(X)\tau)\e_X$. Apart from this, the proofs for even and odd $k$ are analogous.
\end{proof}

Finally, one might speculate that the method from Theorem~\ref{theorem cycle integral siegel theta} could be used to compute cycle integrals of Siegel theta functions (and the corresponding theta lifts) on lattices of other signatures, e.g. signature $(1,n)$ or $(2,n)$. We hope to come back to this problem in the future.

\end{document}